\date{\today}
\newtheorem{theorem}{Theorem}
\newtheorem{proposition}{Proposition}
\newtheorem{definition}{Definition}
\newtheorem{example}{Example}
\newtheorem*{Gordan}{Gordan's Theorem of the Alternative}{\bf}{\it}
\newtheorem*{Mozkin}{Motzkin's Theorem of the Alternative}{\bf}{\it}
\newtheorem*{th2.4}{Theorem 2.4 in Ref.\;\cite{osu99}}{\bf}{\it}
\newtheorem*{th3.6}{Theorem 3.6 in Ref.\;\cite{osu99}}{\bf}{\it}
\newtheorem*{KT}{Kuhn-Tucker Theorem}{\bf}{\it}
\newcommand{\R}{\mathbb R}
\title{Multiobjective Programming and Weighting Scalar Problem\thanks{This research is partially supported by the TU Varna Grant No 18/2012.}}
\author{\footnotesize VSEVOLOD I. IVANOV\thanks{Department of Mathematics, Technical University of Varna, Bulgaria. Email: vsevolodivanov@yahoo.com}}
\begin{document}

\maketitle
\begin{abstract}
In this work is  discussed the paper by Osuna-Gomez, Beato-Moreno and Rufian-Lizana, Generalized convexity in multiobjective programming, {\it J. Math. Anal. Appl.} 233 (1999) 205--220. We point out an error in the proofs of main Theorems 2.4 and 3.6 in this reference. Then we derive correct proofs, which  are based on a different approach, because the scheme of the proofs of the authors is not applicable. As a result of the discussion two additional theorems are obtained such that the scheme of the proofs of Theorems 2.4 and 3.6 in the mentioned reference works for them.

{\bf Key words.}\; Multiobjective optimization, weighting scalar problem, inequality constraints, Kuhn-Tucker invex problems

{\bf AMS subject classifications.} 90C26, 90C29, 90C46, 26B25
\end{abstract}

\section{Introduction}
\label{s1}
Generalized convexity assumptions play very important role in scalar characterizations of optimality in multiobjective programming. Invex functions are among the most general classes of functions. Several classes of generalized convex ones satisfy the property that every stationary point is a global minimizer, but invex functions are the largest class with this property. Invex scalar functions were introduced by Hanson \cite{han81}. 
After Hanson's initial work appeared the paper of  Martin \cite{mar85}, where KT-invex scalar problems with inequality constraint   were introduced. They are the largest class of problems such that every point, which satisfies Kuhn-Tucker optimality conditions, is a global minimizer. Martin's optimality conditions were generalized by   Osuna-Gomez, Beato-Moreno and Rufian-Lizana   \cite{osu99} to vector functions and vector problems.

In the paper \cite{osu99} has been studied the weighting problem. In the present work, we discuss Theorems 2.4 and 3.6 from the Ref. \cite{osu99}. We point out an error in the proofs of these theorems. A simple example shows that their proofs are not correct. After that we derive correct proofs of the mentioned  results, which are based on a different approach, because the scheme of the proofs of the authors is not applicable. Following the scheme of the proofs of the authors of Theorems 2.4 and 3.6 in \cite{osu99}, we obtain two additional theorems, which concern the connection between the vector problem and its scalarization.

\section{Preliminaries}

In the present paper, we consider the following problem:
\medskip

minimize$\quad f(x)=(f_1(x),f_2(x),\dots,f_n(x))$

subject to$\quad x\in S\subseteq\R^s$. \hfill  (P)
\medskip

First, we suppose that $S$ is an open set in the space $\R^s$. Next, we suppose that $S$ is functionally specified by inequality constraints.
All results given here are obtained for Fr\'echet differentiable  functions. 

We begin with some preliminary definitions and notations. We denote the scalar product of the vectors $a$ and $b$ by $a\, b$ and the Jacobean matrix of the vector function $f$ at the point $x$ by $J f(x)$. We use the following standard notation comparing vectors $a=(a_1,a_2,\dots,a_n)\in\R^n$ and $b=(b_1,b_2,\dots,b_n)\in\R^n$:

\bigskip \noindent
\begin{tabular}{@{}lllll}
$a<b$ & iff & $a_i<b_i$ & for all & $i=1,2,\dots,n$;\\
$a\leqq b$ & iff & $a_i\le b_i$ & for all & $i=1,2,\dots,n$; \\
$a\le b$ & iff & $a_i\le b_i$ & for all & $i=1,2,\dots,n$, but $a\ne b$.
\end{tabular}
\bigskip

The following theorems of the alternative are well-known:
\begin{Gordan}{\bf(\cite{gor1873,man69})}
For each given matrix $A$, either the system
\[
Ax<0
\]
has a solution $x$, or the system
\[
A^Ty=0,\; y\ge 0,\; y\ne 0
\]
has a solution $y$, but never both.
\end{Gordan}

\begin{Mozkin}{\bf(\cite{moz1936,man69})}
Let $A$ and $B$ be given matrices, with $A$ being non vacuous. Then either the system
\[
Ax<0,\quad Bx\leqq 0
\]
has a solution $x$, or the system
\[
A^Ty+B^Tz=0,\; y\ge 0,\; z\geqq 0
\]
has a solution $(y,z)$, but never both.
\end{Mozkin}

\begin{definition}
A point $\bar x\in S$ is called a weakly efficient solution of {\rm (P)} iff there is no $x\in S$ such that $f(x)<f(\bar x)$.
\end{definition}
                                                                                                               
\begin{definition}[\cite{osu99}]
A point $(\bar x,\bar\lambda)\in S\times\R^n$ is called critical for the vector function $f$ (or vector critical point) iff $\bar\lambda\geqq 0$, $\bar\lambda\ne 0$, and $\bar\lambda\, J f(\bar x)=0$. 
\end{definition}

A fundamental method for solving the multiobjective problem (P) is the linear scalarization. For every vector $\lambda$ from the set 
\[
\Lambda:=\{\lambda=(\lambda_1,\lambda_2,\ldots,\lambda_n)\in\R^n\mid\sum_{i=1}^n\lambda_i=1,\;\lambda_i\geqq 0,\; i=1,2,\ldots, n\}
\]
we consider the following weighting scalar problem:

\bigskip
minimize$\quad \lambda f(x)\quad$subject to$\quad x\in S$.\hfill ${\rm (P_\lambda)}$
\bigskip

The following well-known results shows why linear scalarization is very important:
\begin{proposition}
If $\bar x$ is a weakly efficient solution of the problem {\rm (P)}, then there exists $\bar\lambda\in\Lambda$ such that $(\bar x,\bar\lambda)$ is a vector critical point for $f$.
\end{proposition}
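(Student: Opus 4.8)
The plan is to derive a contradiction from assuming $\bar x$ is weakly efficient while no suitable $\bar\lambda$ exists, and to do this via Gordan's Theorem of the Alternative applied to the Jacobian $Jf(\bar x)$. First I would recall that $S$ is open and $f$ is Fr\'echet differentiable at the weakly efficient point $\bar x$. The key observation is that weak efficiency has a first-order consequence: the linearized system $Jf(\bar x)\,d < 0$ must have no solution $d\in\R^s$. Indeed, if some direction $d$ satisfied $Jf(\bar x)\,d < 0$, then by differentiability each component would satisfy $f_i(\bar x + t d) = f_i(\bar x) + t\,(Jf(\bar x)\,d)_i + o(t)$, so for all sufficiently small $t>0$ we would get $f_i(\bar x + t d) < f_i(\bar x)$ for every $i$; since $S$ is open, $\bar x + t d\in S$ for small $t$, contradicting weak efficiency of $\bar x$. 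This is the substantive analytic step, and I expect it to be the main obstacle, since it is where the differentiability of $f$ and openness of $S$ must be combined carefully to pass from the infinitesimal strict inequality to a genuine nearby point that beats $\bar x$.

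Once the system $Jf(\bar x)\,d < 0$ is shown to be unsolvable, the rest is a direct application of Gordan's Theorem with the matrix $A = Jf(\bar x)$. Since the first alternative $A d < 0$ has no solution, the second alternative must hold: there exists $y$ with $A^T y = 0$, $y\geqq 0$, $y\neq 0$; equivalently there is a vector $\bar\mu\in\R^n$ with $\bar\mu\geqq 0$, $\bar\mu\neq 0$, and $\bar\mu\,Jf(\bar x) = 0$. This $\bar\mu$ already satisfies the definition of a vector critical point, but its components need not sum to $1$, whereas the proposition asks for $\bar\lambda\in\Lambda$.

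The final step is therefore a normalization. Because $\bar\mu\geqq 0$ and $\bar\mu\neq 0$, the sum $\sigma := \sum_{i=1}^n \bar\mu_i$ is strictly positive, so I set $\bar\lambda := \bar\mu/\sigma$. Then $\bar\lambda\geqq 0$, $\sum_{i=1}^n \bar\lambda_i = 1$, hence $\bar\lambda\in\Lambda$, and scaling by the positive constant $1/\sigma$ preserves both $\bar\lambda\,Jf(\bar x)=0$ and $\bar\lambda\neq 0$. Thus $(\bar x,\bar\lambda)$ is a vector critical point for $f$ with $\bar\lambda\in\Lambda$, which is exactly the assertion. I would keep the write-up short: one paragraph for the contrapositive argument producing the unsolvable linear system, one invocation of Gordan, and one line for the normalization.
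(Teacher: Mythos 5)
Your proof is correct, and it follows exactly the route the paper implicitly intends: the paper states Gordan's Theorem of the Alternative immediately before this proposition precisely because the standard argument is the one you give (weak efficiency plus openness of $S$ and differentiability rule out any descent direction $d$ with $Jf(\bar x)\,d<0$, Gordan then yields $\bar\mu\geqq 0$, $\bar\mu\ne 0$ with $\bar\mu\,Jf(\bar x)=0$, and normalization puts it in $\Lambda$). The paper itself omits the proof as well known, so there is nothing further to compare.
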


\begin{proposition}[\cite{geo68}]
Let $\bar x\in S$ be a global solution of ${\rm (P_\lambda)}$ for some $\lambda\in\Lambda$. Then $\bar x$ is a weakly efficient solution of ${\rm (P)}$.
\end{proposition}

Another result concerning weighting problem of (P) and invex functions is the following one:

\begin{proposition}[\cite{osu99}, Theorem 2.3]\label{pr1}
Let $f$ be an invex vector function, defined on some open set $S$. If $x,$ is a vector critical point and $\lambda$ is the respective multiplier such that $\lambda\ge 0$ and $\lambda\, J f(x)=0$, then $x$ solves the weighting problem {\rm (P$_{\lambda}$)}.
\end{proposition}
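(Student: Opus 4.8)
The plan is to apply the definition of an invex vector function directly, combining the componentwise invexity inequalities with the nonnegative weights $\lambda_i$ and then exploiting the critical point condition $\lambda\, J f(x)=0$ to annihilate the resulting first-order term. Recall that $f$ being invex on the open set $S$ means there is a \emph{single} kernel map $\eta(\cdot,\cdot)$ such that for all $u,v\in S$ and every component index $i$,
$$f_i(u)-f_i(v)\ge \nabla f_i(v)\,\eta(u,v),$$
where $\nabla f_i(v)$ denotes the $i$-th row of $J f(v)$.

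Write $\bar x$ for the vector critical point $x$, with multiplier $\lambda\ge 0$ satisfying $\lambda\, J f(\bar x)=0$. First I would fix an arbitrary $u\in S$ and apply the invexity inequalities at the base point $v=\bar x$, obtaining $f_i(u)-f_i(\bar x)\ge \nabla f_i(\bar x)\,\eta(u,\bar x)$ for each $i=1,\dots,n$. Since $\lambda_i\ge 0$ for all $i$, multiplying the $i$-th inequality by $\lambda_i$ preserves its direction, and summing over $i$ yields
$$\lambda f(u)-\lambda f(\bar x)\ \ge\ \bigl(\lambda\, J f(\bar x)\bigr)\,\eta(u,\bar x).$$

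The critical point condition $\lambda\, J f(\bar x)=0$ now makes the right-hand side vanish, so $\lambda f(u)\ge \lambda f(\bar x)$. As $u\in S$ was arbitrary, $\bar x$ is a global minimizer of $\lambda f$ over $S$; that is, it solves ${\rm (P_\lambda)}$.

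I expect the only delicate point to be the structural hypothesis hidden in the phrase \emph{invex vector function}: the argument crucially requires that all components $f_i$ share the same kernel $\eta$, since otherwise the weighted sum of the first-order terms would not collapse into $\lambda\, J f(\bar x)$ applied to a common $\eta$. Verifying this is a matter of invoking the correct definition rather than a genuine computation; beyond that, the proof is just a nonnegative linear combination of inequalities, and I anticipate no substantive obstacle.
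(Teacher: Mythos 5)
Your proof is correct. The paper states this proposition without proof (it is quoted from Ref.\;\cite{osu99} as Theorem 2.3), but your argument --- componentwise invexity with a \emph{common} kernel $\eta$, weighted by $\lambda_i\geqq 0$, summed, and collapsed via $\lambda\, J f(\bar x)=0$ --- is the standard one and is exactly the argument the paper itself deploys for the strict analogue in the second half of the proof of Theorem~\ref{th2}, so nothing further is needed.
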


\section{Unconstrained multiobjective programming problems}
In this section, we consider the problem (P) with $S$ being an open set.

\begin{definition}
A  Fr\'echet differentiable vector function $f$, defined on some open set $S\subseteq\R^s$ is called invex  iff for all $x\in S$ and $y\in S$ there exists a vector $\eta(x,y)\in\R^s$ which depend on $x$ and $y$ such that
\[
f(y)-f(x)\geqq J f(x)\,\eta(x,y)
\]
\end{definition}


\begin{th2.4}\label{th1}
Each vector critical point is a weakly efficient solution and solves a weighting scalar problem if and only if the objective function is invex.
\end{th2.4}

It is not clear from this statement whether the multiplier $\lambda$, which appears in the definition of a vector critical point, is the same as the multiplier, which appears in the weighting problem. One can see from the proof in Ref. \cite{osu99} that the authors suppose that the multiplier is the same. In the proof of the theorem, the authors conclude from the assumption that the set of vector critical points, the set of weakly efficient global solutions and the set of global solutions of the weighting problem coincide, that for all $x\in S$, $\bar x\in S$ the system
\begin{equation}\label{2}
\left[
\begin{array}{l|l}
0 & 1 \\
J f(\bar x) & f(x)-f(\bar x)
\end{array}\right]\left[
\begin{array}{l}
\eta \\
\xi
\end{array}\right]<0
\end{equation}
has a solution $(\eta,\xi)$, $\eta\in\R^s$, $\xi\in\R$, applying Gordan's theorem of the alternative. The following example disprove this fault conclusion. Therefore, the proof of the theorem is not correct.

\begin{example}\label{ex1}
Consider the vector function $f:\R\to\R^2$, $f=(f_1,f_2)$ defined as follows:
\[
f_1(x)=\left\{
\begin{array}{ll}
(x+1)^2, & \text{if}\quad x<-1 \\
0, & \text{if}\quad -1\le x\le 1 \\
(x-1)^2, & \text{if}\quad x>1
\end{array}\right.
\quad\text{and}\quad
f_2(x)=\left\{
\begin{array}{ll}
(x+1)^4, & \text{if}\quad x<-1 \\
0, & \text{if}\quad -1\le x\le 1 \\
(x-1)^4, & \text{if}\quad x>1.
\end{array}\right.
\]
Both functions $f_1$ and $f_2$ are convex and everywhere differentiable. The set of weakly efficient points and the set of vector critical points coincide with the closed interval $[-1,1]$. For every $\lambda\ge 0$ the set of global solutions of the weighting problem also coincide with the interval $[-1,1]$. Let us take $\bar x=0$ and $x$ is an arbitrary point from the interval $[-1,1]$ such that $x\ne\bar x$. Then $J f(\bar x)=0$, $f(x)-f(\bar x)=0$ and the system (\ref{2})
has no solutions. 
\end{example}

Let us consider the proof of Theorem 2.4 in \cite{osu99}.
It follows from Gordan's Theorem of the alternative that the system (\ref{2}) has a solution $(\eta,\xi)\in\R^{s+1}$ for all $x\in S$, $\bar x\in S$ if and only if the system

\begin{equation}\label{3}
[\nu\mid\lambda]\left[
\begin{array}{l|l}
0 & 1 \\
J f(\bar x) & f(x)-f(\bar x)
\end{array}\right]=0,\; (\nu,\lambda)\geqq 0,\; (\nu,\lambda)\ne 0
\end{equation}
is not solvable for all $x\in S$, $\bar x\in S$. The last statement is quite different from
\[
[\nu\mid\lambda]\left[
\begin{array}{l|l}
0 & 1 \\
J f(\bar x) & f(x)-f(\bar x)
\end{array}\right]=0,\\
\nu>0,\;\lambda\geqq 0,\; \lambda\ne 0.
\]
The error in the proof of Theorem 2.4 in Ref. \cite{osu99} passes through the false statement, which does not appear in the proof, that for arbitrary $x\in S$ and $\bar x\in S$ the system
\[
\lambda\, J f(\bar x)=0,\;\lambda[f(x)-f(\bar x)]=0,\; \lambda\geqq 0,\;\lambda\ne 0
\]
has no solutions. This statement is a consequence of the assumption that every vector critical point with a respective multiplier $\lambda$ is the unique solution of the weighting problem. If we make such assumption, then the proof of Theorem 2.4 in Ref. \cite{osu99} become correct (see Theorem \ref{th2} below).

The next definition is a slight generalization of the notion of strictly invex scalar function.

\begin{definition}
We call a Fr\'echet differentiable vector function $f$, defined on some open set $S\subseteq\R^s$, strictly invex  iff for all $x\in S$ and $y\in S$, $y\ne x$ there exists a vector $\eta(x,y)\in\R^s$ which depend on $x$ and $y$ such that
\[
f(y)-f(x)>J f(x)\,\eta(x,y).
\]
\end{definition}

\begin{theorem}\label{th2}
Let the vector function $f$  be Fr\'echet differentiable on some open set $S\subseteq\R^s$. Then for each vector critical point $x$ with a respective multiplier $\lambda$ the point $x$ is the unique global solution of the weighting problem ${\rm (P_\lambda)}$ with the same vector $\lambda$ if and only if ${\rm (P)}$ is strictly invex.
\end{theorem}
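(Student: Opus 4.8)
The plan is to prove the two implications separately, using strict invexity directly for the ``only if'' part and a correct application of Gordan's theorem of the alternative for the ``if'' part. The second implication is where the scheme of \cite{osu99} must be repaired, and it is the substantive part of the argument; the first is a short computation.

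For the direction \emph{strictly invex $\Rightarrow$ uniqueness}, I would argue as follows. Let $(x,\lambda)$ be a vector critical point, so that $\lambda\geqq 0$, $\lambda\ne 0$ and $\lambda\,Jf(x)=0$. Fix any $y\in S$ with $y\ne x$. Strict invexity supplies $\eta=\eta(x,y)$ with $f(y)-f(x)>Jf(x)\,\eta$, i.e.\ strict inequality in every component. Multiplying the components by the nonnegative weights $\lambda_i$ and summing, the index $i$ with $\lambda_i>0$ (one exists since $\lambda\ne 0$) contributes a strict inequality, so $\lambda\,(f(y)-f(x))>\lambda\,Jf(x)\,\eta=0$. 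Hence $\lambda f(y)>\lambda f(x)$ for every $y\ne x$, which says exactly that $x$ is the unique global solution of ${\rm (P_\lambda)}$.

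For the converse, fix $\bar x,x\in S$ with $x\ne\bar x$; I must produce $\eta$ with $f(x)-f(\bar x)>Jf(\bar x)\,\eta$. The homogenization behind system (\ref{2}) does this: a solution $(\eta,\xi)$ of (\ref{2}) has first row $\xi<0$, and dividing the remaining rows by $-\xi>0$ yields the required strict invexity inequality. By Gordan's theorem it therefore suffices to show that the alternative system (\ref{3}) is unsolvable. Writing (\ref{3}) out with multipliers $\nu\in\R$ and $\lambda\in\R^n$ gives $\lambda\,Jf(\bar x)=0$, $\nu+\lambda\,(f(x)-f(\bar x))=0$, $\nu\ge 0$, $\lambda\geqq 0$, $(\nu,\lambda)\ne 0$. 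I would rule out any solution by cases. If $\lambda=0$, the second equation forces $\nu=0$, contradicting $(\nu,\lambda)\ne 0$. If $\lambda\ne 0$, then $(\bar x,\lambda)$ is a vector critical point, so by hypothesis $\bar x$ is the unique global minimizer of $\lambda f$ on $S$; since $x\ne\bar x$ this gives $\lambda f(x)>\lambda f(\bar x)$, whereas $\nu=\lambda\,(f(\bar x)-f(x))\ge 0$ says $\lambda f(x)\le\lambda f(\bar x)$ --- a contradiction. Thus (\ref{3}) has no solution and Gordan's theorem delivers the desired $\eta$.

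I expect the main obstacle to be precisely this case analysis in the converse: one must recognize that the uniqueness hypothesis is exactly what kills a solution of (\ref{3}) with $\lambda\ne 0$, while the case $\lambda=0$ is vacuous. This is where the original proof in \cite{osu99} breaks down, since there the mere nonnegativity $\nu\ge 0$ (rather than a strict $\nu>0$) and the possibility $\lambda f(x)=\lambda f(\bar x)$ are overlooked --- exactly the degeneracy exploited by Example \ref{ex1}. Upgrading ``weakly efficient / global solution'' to ``unique global solution'' closes this gap, so the Gordan-based scheme of the authors now goes through without modification.
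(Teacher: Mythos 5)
Your proposal is correct and follows essentially the same route as the paper: the forward direction is the same weighted-sum computation using a strictly positive $\lambda_i$, and the converse applies Gordan's theorem to the same homogenized system (\ref{2})/(\ref{3}), with the uniqueness hypothesis ruling out a nonzero solution of (\ref{3}). The only cosmetic difference is that you split the unsolvability of (\ref{3}) into the cases $\lambda=0$ and $\lambda\ne 0$, whereas the paper splits it into the sub-systems with $\nu>0$ and with $\nu$ eliminated; both case analyses are exhaustive and rest on the same use of the uniqueness assumption.
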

\begin{proof}
Suppose that every vector critical point with a multiplier $\lambda$  is the unique global solution of the weighting problem ${\rm (P_\lambda)}$.
Take arbitrary points $x\in S$ and $\bar x\in S$, $x\ne \bar x$. Then the system
\[
\lambda\, J f(\bar x)=0,\;\nu+\lambda\,[f(x)-f(\bar x)]=0,\; \nu>0,\;\lambda\geqq 0
\]
for $\nu$ and $\lambda$ has no solutions, because every vector critical point is a global solution of the weighting problem. The system 
\[
\lambda\, J f(\bar x)=0,\;\lambda\,[f(x)-f(\bar x)]=0,\; \lambda\geqq 0,\;\lambda\ne 0
\]
for $\lambda$ also has no solutions, because every vector critical point is the unique global solution of the weighting problem. Therefore, the system (\ref{3})
has no solutions. Then it follows from Gordan's alternative theorem that the system
\[
\left[
\begin{array}{l|l}
0 & 1 \\
J f(\bar x) & f(x)-f(\bar x)
\end{array}\right]\left[
\begin{array}{l}
\zeta \\
\xi
\end{array}\right]<0
\]
has a solution $(\zeta,\xi)$, $\zeta\in\R^s$, $\xi\in\R$ for all $x\in S$, $\bar x\in S$. It follows from here that there exist $\xi<0$ and $\zeta\in\R^s$ such that
\[
\xi[f(x)-f(\bar x)]+J f(\bar x)\,\zeta<0.
\]
Let $\eta=(-\zeta)/\xi$. Then we have
\[
f(x)-f(\bar x)>J f(\bar x)\,\eta
\]
which implies that $f$ is strictly invex, because $x$ and $\bar x$ are arbitrary points from $S$.

Suppose that $f$ is strictly invex. We prove that for every vector critical point $\bar x$ with a respective multiplier $\bar\lambda$ the point $\bar x$ is the only solution of $(P_{\bar\lambda})$. Assume the contrary that there exist $\bar x\in S$, $x\in S$, $x\ne\bar x$ and $\bar\lambda\in\Lambda$ such that
\begin{equation}\label{1}
\bar\lambda\, J f(\bar x)=0,\quad 
\bar\lambda\,[f(x)-f(\bar x)]\le 0.
\end{equation}
It follows from strict invexity that there exists $\eta\in\R^s$ with
\[
f_i(x)-f_i(\bar x)>\nabla f_i(\bar x)\,\eta\quad\textrm{for all}\quad i=1,2,\dots,n.
\] 
Taking into account that at least one $\bar\lambda_i$ is strictly positive, we obtain that
\[
\bar\lambda f(x)-\bar\lambda f(\bar x)>\bar\lambda\, J f(\bar x)\,\eta,
\]
which is contrary to Equations (\ref{1}).
\end{proof}

Example \ref{ex1} does not prove that Theorem 2.4 in \cite{osu99} is not satisfied, because the functions $f_1$ and $f_2$ in this example are convex; hence invex with respect to the kernel $\eta(x,y)=y-x$. It only shows that the proof, which is given there is not correct. Really, the following theorem holds. It is equivalent to Theorem 2.4 in \cite{osu99}:

\begin{theorem}\label{th3}
Let ${\rm (P)}$  be Fr\'echet differentiable. Then each vector critical point $x$ with a respective multiplier $\lambda$ is a global solution of the weighting problem ${\rm (P_\lambda)}$ with the same vector $\lambda$ if and only if ${\rm (P)}$ is  invex.
\end{theorem}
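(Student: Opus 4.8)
The plan is to establish the two implications separately, mirroring as closely as possible the scheme used in Theorem~\ref{th2}, and to locate the whole difference in the choice of the theorem of the alternative.

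The implication from invexity to the weighting problem is already Proposition~\ref{pr1}: if $f$ is invex and $(\bar x,\bar\lambda)$ is a vector critical point, then for every $x\in S$ one has $f(x)-f(\bar x)\geqq Jf(\bar x)\,\eta(\bar x,x)$; multiplying this inequality by $\bar\lambda\geqq 0$ and using $\bar\lambda\,Jf(\bar x)=0$ yields $\bar\lambda\,f(x)\ge\bar\lambda\,f(\bar x)$, so $\bar x$ is a global solution of $(P_{\bar\lambda})$. I would therefore simply invoke Proposition~\ref{pr1} for this direction.

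For the converse I would keep the same matrix $M=\left[\begin{array}{l|l}0 & 1\\ Jf(\bar x) & f(x)-f(\bar x)\end{array}\right]$ as in the corrected proof of Theorem~\ref{th2}, but apply Motzkin's theorem instead of Gordan's, splitting $M$ into the single \emph{strict} top row $A=[\,0\mid 1\,]$ (which is non-vacuous) and the $n$ \emph{non-strict} bottom rows $B=[\,Jf(\bar x)\mid f(x)-f(\bar x)\,]$. For fixed $x,\bar x\in S$ the primal Motzkin system reads
\[
\xi<0,\qquad Jf(\bar x)\,\zeta+\xi\,[f(x)-f(\bar x)]\leqq 0,
\]
with $\zeta\in\R^s$, $\xi\in\R$, and its alternative is
\[
\lambda\,Jf(\bar x)=0,\qquad \nu+\lambda\,[f(x)-f(\bar x)]=0,\qquad \nu>0,\quad\lambda\geqq 0
\]
(here $\nu$ is a scalar, so the Motzkin sign condition $\nu\ge 0$ means $\nu>0$). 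I would show the alternative is unsolvable: from the second equation $\lambda\,[f(x)-f(\bar x)]=-\nu<0$, which forces $\lambda\ne 0$, so $(\bar x,\lambda)$ is a vector critical point; by hypothesis $\bar x$ solves $(P_\lambda)$ globally, giving $\lambda\,[f(x)-f(\bar x)]\ge 0$, a contradiction. Hence by Motzkin's theorem the primal system has a solution $(\zeta,\xi)$ with $\xi<0$; setting $\eta=-\zeta/\xi$ gives $f(x)-f(\bar x)\geqq Jf(\bar x)\,\eta$, and since $x,\bar x$ are arbitrary, $f$ is invex.

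The crux, and the reason the authors' original Gordan-based scheme cannot reproduce this result directly, is exactly this choice. In Theorem~\ref{th2} the uniqueness of the minimizer of $(P_\lambda)$ lets one also discard the case $\nu=0$ of the Gordan alternative~(\ref{3}), which is what upgrades the conclusion to \emph{strict} invexity. Without uniqueness that case survives, so Gordan does not apply to the fully strict system; Motzkin's theorem is the right tool because it isolates the one strict coordinate $\xi<0$ from the weak inequalities and asks only that the $\nu>0$ branch of the alternative be ruled out, which is precisely what the weaker hypothesis ``$\bar x$ is a global solution'' delivers. The only routine verifications are that $A$ is non-vacuous and that the normalization $\eta=-\zeta/\xi$ is legitimate, the latter because $\xi<0$.
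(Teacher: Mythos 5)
Your proof is correct, but it takes a genuinely different route from the paper's for the nontrivial direction. Where you keep the matrix $\left[\begin{array}{l|l}0 & 1\\ Jf(\bar x) & f(x)-f(\bar x)\end{array}\right]$ and repair the original scheme of Ref.~\cite{osu99} by switching from Gordan to Motzkin --- putting the single row $[\,0\mid 1\,]$ into the strict block so that the alternative forces $\nu>0$, which is exactly the branch that the hypothesis ``every vector critical point solves $(\mathrm{P}_\lambda)$'' rules out --- the paper abandons theorems of the alternative entirely for this theorem and argues by linear programming duality: if the system $Jf(\bar x)\,\eta\leqq f(x)-f(\bar x)$ had no solution, the primal LP ``maximize $0$'' would be infeasible, so its dual (feasible at $\lambda=0$) would be unbounded, yielding a feasible $\bar\lambda\ne 0$ with $\bar\lambda\,[f(x)-f(\bar x)]<0$, contradicting the hypothesis at the vector critical point $(\bar x,\bar\lambda)$. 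The two arguments hinge on the same fact and are of essentially equal difficulty; yours has the merit of exhibiting precisely which theorem of the alternative the authors of \cite{osu99} should have used and why Gordan applied to the full matrix only works under the uniqueness hypothesis of Theorem~\ref{th2} (where the $\nu=0$ branch of (\ref{3}) can also be excluded), while the paper's LP-duality argument makes the role of the hypothesis slightly more transparent. One small point common to both proofs: the multiplier $\lambda$ produced by the alternative (or by dual feasibility) satisfies only $\lambda\geqq 0$, $\lambda\ne 0$, so strictly speaking it should be normalized to lie in $\Lambda$ before invoking the hypothesis on $(\mathrm{P}_\lambda)$; this rescaling changes nothing. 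Your handling of the forward direction via Proposition~\ref{pr1} coincides with the paper's.
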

\begin{proof}
Let each vector critical point $x$ with a respective multiplier $\lambda$ be a global solution of the weighting problem (P$_\lambda$). We prove that (P) is  invex. Suppose the contrary that there exist $x\in S$ and $\bar x\in S$ such that the system 
\[
\nabla f_i(\bar x)\,\eta\leqq f_i(x)-f_i(\bar x),\; i=1,2,\dots,n
\]
has no solution for $\eta\in\R^s$. Therefore, the linear programming problem 

\medskip
\begin{tabular}{ll}
maximize & 0         \\
subject to &$J f(\bar x)\,\eta \leqq f(x)-f(\bar x)$
\end{tabular}
\medskip

\noindent
is infeasible. Its dual linear programming problem is the following one:

\medskip
\begin{tabular}{ll}
minimize & $\lambda\,[f(x)-f(\bar x)]$ \\
subject to &
$\lambda\, J f(\bar x)=0$, \\
& $\lambda\geqq 0$.
\end{tabular}
\medskip

It follows from Duality theorem in linear programming that both problems are simultaneously solvable or unsolvable. The dual problem is feasible, because $\lambda=0$ is a feasible point.  Therefore, the dual problem is not solvable, because it is unbounded. Hence, there exists a feasible point $\bar\lambda$ for the dual problem such that $\bar\lambda\ne 0$ and  $\bar\lambda\,[f(x)-f(\bar x)]<0$. On the other hand, since $\bar\lambda$ is a feasible point, then $\bar x$  is a vector critical point for $f$ with a respective multiplier $\bar\lambda$. According to the assumption $\bar x$ is a solution of the weighting problem (P$_{\bar\lambda}$) we have $\bar\lambda\,[f(x)-f(\bar x)]\geqq 0$, which is a contradiction.

The converse claim is Proposition \ref{pr1} (Theorem 2.3 in \cite{osu99}).
\end{proof}

\section{Weighting scalar problem and  Kuhn-Tucker stationary points}

In this section, we consider the case when (P) is an inequality constrained problem, that is 
\[
S:=\{x\in X\mid g(x)\leqq 0\},
\]
where $X\subseteq\R^s$ is an open set and $f:X\to\R^n$, $g:X\to\R^m$.
For every feasible point $x\in S$,  let $I(x)$ be the set of active constraints
\[
I(x):=\{i\in\{1,2,\ldots,m\}\mid g_i(x)=0\}.
\]

The following theorem is well-known (see, for example, \cite[Theorem 6.6.10]{gio04}):

\begin{KT}
Let the problem {\rm (P)} with inequality constraints be  Fr\'echet differentiable and $\bar x\in S$ be a weak local
minimizer. Suppose that the Kuhn-Tucker constraint qualification holds. Then there exist Lagrange
multipliers $\bar\lambda=(\bar\lambda_1,\bar\lambda_2,\dots,\bar\lambda_n)\ge 0$, $\bar\mu=(\bar\mu_1,\bar\mu_2,\ldots,\bar\mu_m)\geqq 0$, with
\begin{equation}\label{KT-1}
\mu_i g_i(\bar x)=0,\;i=1,2,\ldots,m,
\end{equation}
\begin{equation}\label{KT-2}
\sum_{i=1}^n\bar\lambda_i\nabla f_i(\bar x)+\sum_{i\in I(\bar x)}\bar\mu_i\nabla g_i(\bar x)=0.
\end{equation}
\end{KT}

\begin{definition}
Each point $x\in S$ which satisfies these necessary optimality conditions with
$\lambda\ge 0$ is called a Kuhn-Tucker stationary point (for short, KT point).
\end{definition}

The connection between Kuhn-Tucker stationary points and weighting problem has been studied in \cite[Theorem 3.6]{osu99} and its proof is again not correct.
For every vector $\lambda\in\Lambda$ consider the weighting scalar problem (P$_\lambda$). The following notion was introduced by Osuna-Gomez, Beato-Moreno, Rufian-Lizana \cite{osu99}. 

\begin{definition}\label{df7}
Let the  problem {\rm (P)} be Fr\'echet differentiable.
Then it is called KT-invex iff  there exists a map $\eta: X\times X\to\R^s$ 
such that the following implication holds:
\[
\left.
\begin{array}{l}
x\in X,\; y\in X,\;   \\
g(x)\leqq 0,\; g(y)\leqq 0 \\
\end{array}\right]
\quad\Rightarrow \quad
\left[
\begin{array}{l}
 f(y)-f(x)\geqq J f(x)\,\eta(x,y) \\
0\geqq\nabla g_{j}(x)\eta(x,y),\; j\in I(x)
\end{array}
\right.
\]
\end{definition}

\begin{th3.6}
Every vector Kuhn-Tucker point is a weakly efficient point and solves a weighting scalar problem if and only if the problem {\rm (P)} is KT-invex.
\end{th3.6}
The error in the proof of this theorem in \cite{osu99} is similar to the error in Theorem 2.4. It is an incorrect application of the Motzkin's theorem of the alternative.

We introduce the following definition:

\begin{definition}
Let the  problem {\rm (P)} be Fr\'echet differentiable.
Then we call {\rm (P)} strictly KT-invex iff  there exists a map $\eta: X\times X\to\R^s$ 
such that the following implication holds:
\[
\left.
\begin{array}{l}
x\in X,\; y\in X,\;y\ne x   \\
g(x)\leqq 0,\; g(y)\leqq 0 \\
\end{array}\right]
\quad\Rightarrow \quad
\left[
\begin{array}{l}
 f(y)-f(x)> J f(x)\,\eta(x,y) \\
0\geqq\nabla g_{j}(x)\eta(x,y),\; j\in I(x)
\end{array}
\right.
\]
\end{definition}

\begin{theorem}\label{th4}
Let ${\rm (P)}$  be Fr\'echet differentiable. Then a  KT stationary point $\bar x$ such that for every multiplier $(\bar\lambda,\bar\mu)$, which satisfies the Kuhn-Tucker necessary optimality conditions (\ref{KT-1}) and (\ref{KT-2}) together with $\bar x$
the point $\bar x$ is the unique global solution of the weighting problem ${\rm (P_{\bar\lambda})}$ with the same vector $\bar\lambda$
if and only if ${\rm (P)}$ is strictly KT-invex.
\end{theorem}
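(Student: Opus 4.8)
The plan is to transpose the argument of Theorem \ref{th2} to the constrained setting, the only structural change being that Gordan's theorem is replaced by Motzkin's theorem of the alternative: the active-constraint requirements $\nabla g_j(\bar x)\,\eta\leqq 0$ are non-strict inequalities, so in Motzkin's scheme they belong to the non-strict block $B$, while the strict block $A$ (which is always non-vacuous here) carries the objective inequalities. As in Theorem \ref{th2} I would prove the two implications separately, the forward one by the alternative theorem and the converse one by a direct computation that now also exploits condition (\ref{KT-2}).

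For the forward implication I read the hypothesis as: every KT stationary point $\bar x$, with every multiplier $(\bar\lambda,\bar\mu)$ satisfying (\ref{KT-1})--(\ref{KT-2}), is the unique global solution of ${\rm (P_{\bar\lambda})}$. Fixing arbitrary feasible $x,\bar x$ with $x\ne\bar x$, I would introduce an auxiliary scalar $\xi$ and apply Motzkin's theorem to
\[
\left[
\begin{array}{l|l}
0 & 1 \\
J f(\bar x) & f(x)-f(\bar x)
\end{array}\right]\left[
\begin{array}{l}
\zeta \\
\xi
\end{array}\right]<0,\qquad
[\,\nabla g_j(\bar x)\mid 0\,]\left[
\begin{array}{l}
\zeta \\
\xi
\end{array}\right]\leqq 0,\quad j\in I(\bar x).
\]
The dual alternative is $\sum_i\lambda_i\nabla f_i(\bar x)+\sum_{j\in I(\bar x)}\mu_j\nabla g_j(\bar x)=0$ and $\nu+\lambda\,[f(x)-f(\bar x)]=0$ with $(\nu,\lambda)\geqq 0$, $(\nu,\lambda)\ne 0$, $\mu\geqq 0$, and the heart of the proof is to show it is unsolvable. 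If $\lambda=0$ the scalar equation forces $\nu=0$, contradicting $(\nu,\lambda)\ne 0$; hence $\lambda\geqq 0$, $\lambda\ne 0$. Putting $\bar\mu_j=\mu_j$ on $I(\bar x)$ and $\bar\mu_j=0$ otherwise, the vector equation is exactly (\ref{KT-2}) and complementary slackness (\ref{KT-1}) holds by construction, so $\bar x$ is a KT point with multiplier $(\lambda,\bar\mu)$. By hypothesis $\bar x$ is then the unique global solution of ${\rm (P_\lambda)}$, whence $\lambda\,[f(x)-f(\bar x)]>0$ and so $\nu=-\lambda\,[f(x)-f(\bar x)]<0$, contradicting $\nu\geqq 0$. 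Thus the primal system is solvable; from its first row $\xi<0$, and setting $\eta=-\zeta/\xi$ clears the sign to yield $f(x)-f(\bar x)>J f(\bar x)\,\eta$ together with $\nabla g_j(\bar x)\,\eta\leqq 0$ on $I(\bar x)$. As $x,\bar x$ were arbitrary feasible points, (P) is strictly KT-invex.

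For the converse I would assume (P) strictly KT-invex and take any KT point $\bar x$ with a multiplier $(\bar\lambda,\bar\mu)$ satisfying (\ref{KT-1})--(\ref{KT-2}). For arbitrary feasible $x\ne\bar x$, strict KT-invexity furnishes $\eta$ with $f_i(x)-f_i(\bar x)>\nabla f_i(\bar x)\,\eta$ for every $i$ and $\nabla g_j(\bar x)\,\eta\leqq 0$ on $I(\bar x)$. Weighting the strict inequalities by the nonnegative numbers $\bar\lambda_i$ and summing, the fact that at least one $\bar\lambda_i$ is positive keeps the sum strict, giving $\bar\lambda\,[f(x)-f(\bar x)]>\bar\lambda\,J f(\bar x)\,\eta$. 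Substituting (\ref{KT-2}) rewrites the right-hand side as $-\sum_{j\in I(\bar x)}\bar\mu_j\,\nabla g_j(\bar x)\,\eta$, which is nonnegative since $\bar\mu\geqq 0$ and $\nabla g_j(\bar x)\,\eta\leqq 0$. Hence $\bar\lambda\,[f(x)-f(\bar x)]>0$ for every feasible $x\ne\bar x$, i.e.\ $\bar x$ is the unique global solution of ${\rm (P_{\bar\lambda})}$.

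The step I expect to be hardest is the unsolvability of the Motzkin dual in the forward direction, and within it the recognition that a dual solution with $\lambda\ne 0$ turns $\bar x$ into a genuine KT point, so that the uniqueness assumption --- and not mere optimality --- can be invoked to force $\lambda\,[f(x)-f(\bar x)]>0$ and the contradiction $\nu<0$; the correct placement of the strict and non-strict rows in the blocks $A$ and $B$, with the zero column appended to the constraint rows, must be set up carefully for Motzkin to apply. By comparison the converse is routine once (\ref{KT-2}) is inserted.
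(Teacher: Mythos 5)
Your proposal is correct and follows essentially the same route as the paper: the converse direction by weighting the strict invexity inequalities with $(\bar\lambda,\bar\mu)$ and invoking (\ref{KT-1})--(\ref{KT-2}), and the forward direction by showing the Motzkin dual system (\ref{7}) is unsolvable (the paper splits this into the two cases $\nu>0$ and $\nu=0,\ \lambda\ne 0$, which is logically the same as your case analysis on $\lambda$) and then normalizing by $\eta=-\zeta/\xi$. Your observation that a dual solution with $\lambda\ne 0$ makes $\bar x$ a genuine KT point, so that uniqueness forces the strict inequality, is exactly the key step the paper uses.
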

\begin{proof}
Let (P) be strictly KT-invex and $\bar x$ be a  KT stationary point. Then there exist vectors $\bar\lambda$ and $\bar\mu$ which satisfy together with $\bar x$ Equations (\ref{KT-1}), (\ref{KT-2}). 
We prove that  $\bar x$ is the unique global solution of the respective weighting problem (P$_{\bar\lambda}$). Choose arbitrary feasible point $x\in S$, 
$x\ne\bar x$.  It follows from strict KT-invexity that there exists  a vector $\eta\in\R^s$ such that 
\begin{equation}\label{5}
f_i(x)-f_i(\bar x)>\nabla f_i(\bar x)\,\eta(x,\bar x),\quad i=1,2,\dots,n
\end{equation}
\begin{equation}\label{6}
0\geqq\nabla g_{j}(\bar x)\,\eta(x,\bar x),\; j\in I(\bar x).
\end{equation}
Let us multiply (\ref{5}) by $\bar\lambda_i$, (\ref{6}) by $\bar\mu_j$ and add the obtained inequalities. Then it follows from (\ref{KT-1}), (\ref{KT-2}) 
and $\bar\lambda\ne 0$ that
\[
\bar\lambda f(x)-\bar\lambda f(\bar x)>0.
\]
Since $x$ is an arbitrary feasible point, then $\bar x$ is the only global solution of (P$_{\bar\lambda}$).

Let every Kuhn-Tucker stationary point $\bar x$, which satisfy together with some vectors $\bar\lambda$ and $\bar\mu$ Equations  (\ref{KT-1}), (\ref{KT-2}) 
is the only global solution of (P$_{\bar\lambda}$). Denote by $J g_I(\bar x)$ the matrix, whose columns are the vectors $\nabla g_j(\bar x)$, $j\in I(\bar x)$. Then for arbitrary feasible point $x$, $x\ne \bar x$ the system
\[
\lambda\, J f(\bar x)+\mu\, J g_I(\bar x)=0,\;\nu+\lambda\,[f(x)-f(\bar x)]=0,\; \nu>0,\;\lambda\geqq 0,\;\mu\geqq 0
\]
for $\nu$, $\lambda$ and $\mu$ has no solutions, because every Kuhn-Tucker stationary point is a global solution of the weighting problem. The system 
\[
\lambda\, J f(\bar x)+\mu\, J g_I(\bar x)=0,\;\lambda\,[f(x)-f(\bar x)]=0,\; \lambda\geqq 0,\;\lambda\ne 0,\;\mu\geqq 0
\]
for $\lambda$ and $\mu$ also has no solutions, because every  Kuhn-Tucker stationary point is the only global solution of the weighting problem. Therefore, the system
\begin{equation}\label{7}
[\nu\mid\lambda]\left[
\begin{array}{l|l}
0 & 1 \\
J f(\bar x) & f(x)-f(\bar x)
\end{array}\right]+\mu\left[J g_I(\bar x)\mid 0\right]
=0,\; (\nu,\lambda,\mu)\geqq 0,\; (\nu,\lambda)\ne 0
\end{equation}
has no solutions. Then it follows from Motzkin's theorem of the alternative that for all $x\in S$, $\bar x\in S$, $x\ne\bar x$ the system
\[\left\{
\begin{array}{r}
\left[
\begin{array}{l|l}
0 & 1 \\
J f(\bar x) & f(x)-f(\bar x)
\end{array}\right]\left[
\begin{array}{l}
\zeta \\
\xi
\end{array}\right]<0 \\
\left[J g_I(\bar x)\mid 0\right]\left[
\begin{array}{l}
\zeta \\
\xi
\end{array}\right]\leqq 0
\end{array}\right.
\]
has a solution $(\zeta,\xi)$, $\zeta\in\R^s$, $\xi\in\R$. It follows from here that there exist $\xi<0$ and $\zeta\in\R^s$ such that
\[
\xi\,[f(x)-f(\bar x)]+ J f(\bar x)\,\zeta<0,\quad J g_I(\bar x)\,\zeta\leqq 0.
\]
Let $\eta=(-\zeta)/\xi$. Then we have
\[
f(x)-f(\bar x)> J f(\bar x)\,\eta,\quad 0\geqq J g_I(\bar x)\,\eta,
\]
which implies that $f$ is strictly invex, because $x$ and $\bar x$ are arbitrary points from $S$ such that $x\ne\bar x$.
\end{proof}

\begin{proposition}[\cite{osu99}]\label{th3.5}
Let the problem {\rm (P)} be KT-invex.  Then for every Kuhn-Tucker stationary triple $(x,\lambda,\mu)$ the point $x$ is a global solution of the respective weighting scalar problem {\rm (P$_\lambda$)}.
\end{proposition}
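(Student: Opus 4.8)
The plan is to reproduce, with non-strict inequalities, exactly the argument used in the first (sufficiency) half of the proof of Theorem~\ref{th4}: there strict KT-invexity forced a \emph{unique} global solution, whereas here plain KT-invexity supplies only a weak invexity inequality and correspondingly yields only that $\bar x$ is \emph{a} global solution. Let $(\bar x,\bar\lambda,\bar\mu)$ be a Kuhn-Tucker stationary triple, so that by definition $\bar\lambda\ge 0$, $\bar\mu\geqq 0$, and Equations (\ref{KT-1}), (\ref{KT-2}) hold together with $\bar x$. Fix an arbitrary feasible point $x\in S$; the goal is to show $\bar\lambda\,f(x)\geqq\bar\lambda\,f(\bar x)$, that is, $\bar\lambda\,[f(x)-f(\bar x)]\geqq 0$.

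First I would invoke KT-invexity. Since $x$ and $\bar x$ are both feasible, the defining map $\eta$ provides a vector $\eta=\eta(x,\bar x)\in\R^s$ with
\[
f_i(x)-f_i(\bar x)\geqq\nabla f_i(\bar x)\,\eta,\quad i=1,2,\dots,n,
\qquad
0\geqq\nabla g_j(\bar x)\,\eta,\quad j\in I(\bar x).
\]
Multiplying the $i$-th inequality of the first family by $\bar\lambda_i\geqq 0$ and summing gives $\bar\lambda\,[f(x)-f(\bar x)]\geqq\bar\lambda\,Jf(\bar x)\,\eta$; multiplying the $j$-th inequality of the second family by $\bar\mu_j\geqq 0$ and summing over the active set gives $\sum_{j\in I(\bar x)}\bar\mu_j\nabla g_j(\bar x)\,\eta\leqq 0$.

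The closing step uses the stationarity condition (\ref{KT-2}), which lets me substitute $\sum_{i=1}^n\bar\lambda_i\nabla f_i(\bar x)=-\sum_{j\in I(\bar x)}\bar\mu_j\nabla g_j(\bar x)$ into the first estimate, yielding
\[
\bar\lambda\,[f(x)-f(\bar x)]\;\geqq\;\bar\lambda\,Jf(\bar x)\,\eta\;=\;-\sum_{j\in I(\bar x)}\bar\mu_j\nabla g_j(\bar x)\,\eta\;\geqq\;0 .
\]
As $x$ was an arbitrary feasible point, $\bar x$ is a global solution of (P$_{\bar\lambda}$). I do not expect a genuine obstacle here; the only points needing care are bookkeeping. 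One must check that the summation over $j$ in (\ref{KT-2}) and the constraint inequalities supplied by KT-invexity both range over exactly the active index set $I(\bar x)$, so that no inactive multipliers intrude — which in this formulation is automatic, since the constraint sum in (\ref{KT-2}) is already taken over $I(\bar x)$ (and complementary slackness (\ref{KT-1}) would enforce it in any case). One must also observe that it is precisely the non-negativity of the multipliers $\bar\lambda_i$ and $\bar\mu_j$ that preserves the inequality directions when the two invexity relations are weighted and added. Note that, unlike in Theorem~\ref{th4}, neither strictness of the invexity inequality nor the condition $\bar\lambda\ne 0$ is needed for this weaker conclusion.
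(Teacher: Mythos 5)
Your proof is correct and is essentially the argument the paper itself uses (with strict inequalities) in the sufficiency half of the proof of Theorem~\ref{th4}: apply KT-invexity at the pair of feasible points, weight the two families of inequalities by the nonnegative multipliers, and cancel via the stationarity condition (\ref{KT-2}). The paper states this proposition as a cited result without reproving it, but your relaxation of that argument to non-strict inequalities is exactly the standard proof, and your side remarks (that neither strictness nor $\bar\lambda\ne 0$ is needed here) are accurate.
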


The next result is equivalent to \cite[Theorem 3.6]{osu99} with a corrected proof:

\begin{theorem}\label{th5}
Let ${\rm (P)}$  be Fr\'echet differentiable. Then each  KT stationary point $x$ such that for every multiplier $(\lambda,\mu)$, which satisfies the Kuhn-Tucker necessary optimality conditions together with $x$, the point $x$ is a global solution of the weighting problem ${\rm (P_\lambda)}$ with the same vector $\lambda$
if and only if ${\rm (P)}$ is  KT-invex.
\end{theorem}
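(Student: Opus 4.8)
The plan is to treat this as the constrained, non-strict counterpart of Theorem \ref{th3} and to prove it by the same linear-programming duality scheme, now carrying along the gradients of the active constraints. The ``if'' direction requires nothing new: it is exactly Proposition \ref{th3.5} (Theorem 3.6 in \cite{osu99}), which asserts that KT-invexity makes every KT stationary triple $(x,\lambda,\mu)$ into a global solution of ${\rm (P_\lambda)}$. All the work lies in the ``only if'' direction: assuming that every KT stationary point $\bar x$ is, for each admissible multiplier $(\lambda,\mu)$, a global solution of ${\rm (P_\lambda)}$, I must produce a kernel $\eta$ witnessing KT-invexity.

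First I would argue by contradiction. Recalling Definition \ref{df7}, the existence of the required map $\eta$ is equivalent to the following: for every pair of feasible points $x,\bar x\in S$ the linear system $J f(\bar x)\,\eta\leqq f(x)-f(\bar x)$, $J g_I(\bar x)\,\eta\leqq 0$ has a solution $\eta\in\R^s$ (one then selects such a solution pairwise to define the map). So suppose (P) is not KT-invex; then there is a pair $x,\bar x\in S$ for which this system is unsolvable. Since $\eta=0$ solves it whenever $x=\bar x$, the offending pair has $x\ne\bar x$. Fixing it, I would consider the infeasible linear program that maximizes $0$ subject to $J f(\bar x)\,\eta\leqq f(x)-f(\bar x)$ and $J g_I(\bar x)\,\eta\leqq 0$, and write down its dual, which reads: minimize $\lambda\,[f(x)-f(\bar x)]$ subject to $\lambda\, J f(\bar x)+\mu\, J g_I(\bar x)=0$, $\lambda\geqq 0$, $\mu\geqq 0$.

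The decisive step is the duality theorem of linear programming, applied exactly as in the proof of Theorem \ref{th3}. The dual is feasible, since $(\lambda,\mu)=(0,0)$ is admissible, whereas the primal is infeasible; hence the dual is unbounded below, which furnishes an admissible pair $(\bar\lambda,\bar\mu)$ with $\bar\lambda\ne 0$ and $\bar\lambda\,[f(x)-f(\bar x)]<0$. It then remains to recognise $\bar x$ as a KT stationary point: extending $\bar\mu$ by zeros on the inactive indices $i\notin I(\bar x)$, one verifies that $(\bar\lambda,\bar\mu)$ satisfies the complementarity relation (\ref{KT-1}) and the stationarity relation (\ref{KT-2}) together with $\bar x$, with $\bar\lambda\ge 0$ and $\bar\mu\geqq 0$. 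By hypothesis $\bar x$ is a global solution of ${\rm (P_{\bar\lambda})}$, yet the feasible point $x$ gives $\bar\lambda f(x)<\bar\lambda f(\bar x)$, a contradiction; thus (P) is KT-invex.

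I expect the only real obstacles to be bookkeeping rather than conceptual. The dual must be formed with the correct signs, the duality theorem must be invoked in the precise form ``infeasible primal plus feasible dual implies unbounded dual,'' and the multiplier $\bar\mu$ produced on the active set $I(\bar x)$ must be extended, with the zero pattern making (\ref{KT-1}) automatic, into a genuine Kuhn-Tucker multiplier. Beyond this, no idea is needed that is not already present in the scheme of Theorem \ref{th3}.
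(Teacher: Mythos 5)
Your proposal is correct and follows essentially the same route as the paper: argue by contradiction, set up the infeasible primal LP with the objective gradients and active-constraint gradients, pass to the dual, invoke LP duality to get an unbounded dual and hence a feasible $(\bar\lambda,\bar\mu)$ with $\bar\lambda\ne 0$ and $\bar\lambda\,[f(x)-f(\bar x)]<0$, recognise $\bar x$ as a KT stationary point, and contradict the hypothesis; the converse is delegated to Proposition~\ref{th3.5} exactly as in the paper (which attributes it to Theorem~3.5, not 3.6, of \cite{osu99} --- a trivial label slip on your part). Your extra remarks about extending $\bar\mu$ by zeros on the inactive indices and about the case $x=\bar x$ are harmless refinements of the same argument.
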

\begin{proof} 
Let each KT stationary point be a solution of the weighting problem. We prove that (P) is KT-invex. Suppose the contrary that there exist $x\in S$, $\bar x\in S$ such that the system
\[
\left\{
\begin{array}{l}
J f(\bar x)\,\eta\leqq f(x)-f(\bar x) \\
\nabla g_{j}(x)\,\eta\leqq 0,\; j\in I(x)
\end{array}\right.
\]
has no solutions for $\eta\in\R^s$. Therefore, the linear programming problem 

\medskip
\begin{tabular}{ll}
maximize & 0         \\
subject to &$\nabla f_i(\bar x)\,\eta\leqq f_i(x)-f_i(\bar x)$, $i=1,2,\ldots,n$,  \\
& $\nabla g_j(\bar x)\,\eta \leqq 0,\; j\in I(\bar x)$
\end{tabular}
\medskip

\noindent
is infeasible. Its linear programming dual problem is the following one:

\medskip
\begin{tabular}{ll}
minimize & $\sum_{i=1}^n\lambda_i\, [f_i(x)-f_i(\bar x)]$ \\
subject to &
$\sum_{i=1}^n\lambda_i\nabla f_i(\bar x)+\sum_{j\in I(\bar x)}\mu_j\nabla g_j(\bar x)=0$, \\
& $\lambda_i\geqq 0,\; i=1,2,\dots, n,\quad  \mu_j\geqq 0,\; j\in I(\bar x)$.
\end{tabular}
\medskip

It follows from Duality theorem that both problems are simultaneously solvable or unsolvable. The dual problem is feasible, because $\lambda=0$, $\mu=0$ is a feasible point.  Therefore, the dual problem is not solvable and it is unbounded. Hence, there exists a feasible point $(\lambda,\mu)$ for the dual problem such that $\lambda\ne 0$. Therefore, $\bar x$ is a Kuhn-Tucker stationary point. According to the hypothesis $\bar x$ is a solution of the weighting problem, that is $\lambda\,[f(x)-f(\bar x)]\geqq 0$. This conclusion is satisfied for every $(\lambda,\mu)$ with $\lambda\ne 0$. Hence the optimal value of the dual problem is bounded from below by 0, which contradicts our indirect conclusion that it is unbounded.

The inverse claim is Proposition \ref{th3.5} (Theorem 3.5 in Ref. \cite{osu99}).
\end{proof}

\end{document}